\newtheorem{assumption}[theorem]{Assumption}
\newcommand{\Acal}{{\cal A}}
\newcommand{\Ical}{{\cal I}}
\newcommand{\Lcal}{{\cal L}}
\newcommand{\Scal}{{\cal S}}
\newcommand{\sign}{\text{sign}}   
\begin{document} 
\title{Convergence Rate Analysis of Proximal  Iteratively Reweighted \MakeLowercase{$\ell_1$} Methods for  \MakeLowercase{$\ell_p$} Regularization Problems}

\author{%
{\sc
Hao Wang\thanks{Email: haw309@gmail.com}
} \\[2pt]
School of Information Science and Technology, ShanghaiTech
University
\\[6pt]
{\sc
	Hao Zeng\thanks{Email: zenghao@shanghaitech.edu.cn}
} \\[2pt]
School of Information Science and Technology, ShanghaiTech
University
\\[6pt]
{\sc and}\\[6pt]
{\sc Jiashan Wang}\thanks{Email: jsw1119@gmail.com}\\[2pt]
Department of Mathematics, University of Washington
}
\shortauthorlist{Hao Wang et al. }

\maketitle

\begin{abstract}
{ In this paper,  we focus on  the local convergence rate analysis of the proximal iteratively reweighted $\ell_1$ algorithms for solving $\ell_p$ regularization
 problems, which are widely applied for inducing sparse solutions. 
We show that if the  \emph{Kurdyka-\L ojasiewicz} (KL)  property  is satisfied,  the algorithm converges 
to  a unique first-order stationary  point; furthermore, the algorithm  has local  linear convergence   or local sublinear convergence. The theoretical results we derived are much stronger than the existing results for iteratively reweighted $\ell_1$ algorithms.}
  Kurdyka-\L ojasiewicz property, $\ell_p$ regularization, iteratively reweighted algorithm 
\end{abstract}

\section{Introduction}

In recent years, sparse optimization problems arises in a wide range of fields including machine learning, image processing and compressed sensing \citep{portilla2009image,figueiredo2007majorization, candes2008enhancing, chun2010sparse, yin2015minimization, scardapane2017group}.  A common  technique to enforce sparsity is to add the $\ell_p$ ($0< p < 1)$ regularization term to the objective function, which is called the $\ell_p$ regularized problem 
\begin{equation}\label{prob.lpproblem}\tag{P}
	\begin{aligned}
		& \underset{x\in \mathbb{R}^n}{\min} 
		& & F(x):= f(x)+ \lambda \|x\|_p^p   \quad \text{with }  \|x\|_p^p:= \sum_{i=1}^n |x_i|^p
	\end{aligned}
\end{equation}
where $f:\mathbb{R}^n\to \mathbb{R}$ is a continuously differentiable function, $p\in(0,1)$ 
and $\lambda > 0$ is the regularization parameter.
It is generally believed that $\ell_p$ can have  superior ability to induce sparse solutions of a system   compared with traditional convex regularization  techniques. For example, when $p\to 0$, this problem approximates the $\ell_0$-norm optimization problem, that is usually useful for image processing; when $p= 1$, that is the well-known $\ell_1$-norm regularized problem. 

However, it is full of challenges to seek the solution of  $\ell_p$-norm optimization problems due to the nonconvex and nonsmooth propery of $\ell_p$-norm.  In fact, \citep{ge2011note} proved that finding the global minimal value of the problem with $\ell_p$-norm   regularization term is strongly NP-Hard.

Recently, effective methods have been proposed to construct smooth approximation models for 
the $\ell_p$ regularization problem.  Some works \citep{candes2008enhancing,lu2014iterative,Chen13} focus on constructing Lipshcitz continuous approximation to replace $|x_i|^p$. Other works \citep{chen2010convergence} 
and \citep{lai2011unconstrained}  take the smoothing technique which adds  perturbation to each $|x_i|$ to form 
the $\epsilon$-approximation of the $\ell_p$-norm. In the later case, the approximate objective function becomes
\begin{equation}\label{l2-lp-appro1}
f(x)+ \lambda \sum_{i=1}^n \left( |x_i| + \epsilon \right)^p, 
\end{equation}
with $\epsilon>0$. 
 Iteratively reweighted $\ell_1$ methods \citep{lu2014iterative,sun2017global,wang2018nonconvex} were proposed for solving 
approximation \eqref{l2-lp-appro1}.
At each iteration, it replaces each component of  the $\epsilon$-approximation  via linearizing $(\cdot)^p$ at $x^{k}$, i.e., 
\begin{equation}\label{first.l2}
	p(|x_i^k|+\epsilon_i)^{p-1}|x_i|.
\end{equation}
There is a tradeoff in the choice of $\epsilon$. Large $\epsilon$  smoothes out many local minimizers, while small values make the subproblems difficult to solve due to bad local minimizers. In order to approximate \eqref{prob.lpproblem} effectively, \citep{lu2014iterative} improved these weights by dynamically updating perturbation parameter $\epsilon_i$ at each iteration.  
Recently, it is shown in \cite{wang2019relating} that the general framework of iteratively reweighted $\ell_1$ methods is equivalent to solving a weighted $\ell_1$ regularization problem, based on which the global convergence and $O(1/k)$ worst-case complexity of optimality residual were analyzed.

 In this paper, we focus on the local convergence rate analysis of the proximal iteratively reweighted $\ell_1$ methods for the $\ell_p$ regularization problem.  This type of algorithms was first presented and  investigated in \cite{lu2014proximal} with fixed $\epsilon > 0$ and there was no  convergence rate established.   Our purpose is to show that local \emph{linear convergence} or \emph{sublinear convergence} can   be obtained under mild assumptions.  The Kurdyka-\L ojasiewicz   (K\L) property  \citep{boltedanlew17, Bolte2014}  is generally believed to  capture a
broad spectrum of the local geometries that a nonconvex function can have and  has been shown to hold
ubiquitously for most practical functions.  
It has been  exploited extensively to analyze the convergence rate of various first-order algorithms for
nonconvex optimization   \citep{Attouch2009, li2017convergence,Bolte2014, zhou2016convergence}. 
However, it has not been exploited to establish the convergence rate of iteratively reweighted methods. 
In this paper, we exploit the K\L\  property of $f$ to provide
a comprehensive study of the convergence rate of iteratively reweighted  $\ell_1$ methods  for  $\ell_p$ regularization  problems. We anticipate our study to substantially advance the existing understanding of the convergence of iteratively reweighted methods to a much
broader range of  nonconvex regularization problems.

\subsection{Notation}
We denote $\mathbb{R}$ and $\mathbb{Q}$ as the set of real numbers and rational numbers.
In $\mathbb{R}^n$, denote $\|\cdot\|_p$ as the $\ell_p$ norm with $p\in(0,+\infty)$, i.e., 
$\|x\|_p = \left(\sum_{i=1}^n |x_i|^p\right)^{1/p}$.   Note  that for $p\in(0,1)$, 
this  does not define a proper norm due to its lack of subadditivity. 
If function $f: \mathbb{R}^n \to \bar{ \mathbb{R}}:=\mathbb{R} \cup \{+\infty\}$
is convex, then the subdiferential of $f$ at $\bar x$ is given by 
\[ \partial f(\bar x):=\{ z \mid f(\bar x) + \langle z, x-\bar x\rangle \le f(x),  \  \forall x\in\mathbb{R}^n\}.\]
In particular, for $x\in \mathbb{R}^n$, we use $\partial \|x\|_1$ to denote the set 
$\{ \xi \in \mathbb{R}^n \mid  \xi_i \in \partial |x_i|, i=1,\ldots, n\}.$

Given a lower semi-continuous function $f$,  the limiting subdifferential at $a$ is defined as 
\[ \bar\partial f(a) :=\{ z^* = \lim_{x^k\to a,  f(x^k)\to f(a)} z^k, \  z^k\in\partial_F f(x^k)\}.\]
The Frechet subdifferential of $f$  at $a$ defined as 
\[ \partial_F f(a):=\{z\in\mathbb{R}^n \mid \liminf_{x\to a } \frac{f(x)-f(a)-\langle z, x-a\rangle}{\|x-a\|_2} \ge 0\}. \]
The Clarke subdifferential $\partial_c f$ is the convex hull of the limiting subdifferential. 
It holds true that $\partial f(a) \subset \bar\partial f(a) \subset \partial_cf(a)$.  
For convex functions, $\partial f(a) = \partial_F f(a) = \bar \partial f(a) = \partial_c f(a)$ 
and for differentiable $f$, $\partial f(a) = \partial_F f(a) = \bar \partial f(a) = \partial_c f(a) = \{\nabla f(a)\}$.

For $f: \mathbb{R}^n \to \mathbb{R}$ and index sets $\Acal$ and $\Ical$ satisfying
$\Acal \cup \Ical  = \{1, \ldots, n\}$,  
let $f(x_{\Ical})$ be  the function in the reduced space $\mathbb{R}^{|\Ical|}$ 
by fixing $x_i = 0, i\in \Acal$. 
For $a, b\in\mathbb{R}^n$,  $a\le b$ means the inequality holds for each component, i.e., 
$a_i \le b_i $ for $i=1,\ldots, n$. 
For closed convex set $\chi \subset \mathbb{R}^n$, define the Euclidean distance of point $a\in\mathbb{R}^n$
to $\beta$ as $\text{dist}(a,  \chi) = \min_{b\in \chi} \| a - b\|_2$. 
Let $\{-1,0,+1\}^n$ be the set of vectors in $\mathbb{R}^n$ filled with elements in $\{-1,0,+1\}$. 
The support of $x\in\mathbb{R}^n$ is defined as 
$\Ical(x) := \{ i \mid x_i \neq 0 \}$.  For $a, b\in \mathbb{R}$, let  $a\bmod b$ denote the remainder of $a$ divided by $b$. 

\section{Proximal iteratively reweighted $\ell_1$ method}
\label{sec.l1lp}

In this section, we present the Proximal Iteratively Reweighted $\ell_1$ (PIRL1) methods and examine their properties when applied to
 \eqref{prob.lpproblem}.  The PIRL1  method is based on the smoothed approximation of $F$ by adding perturbation $\epsilon_i$ to each component 
 of $|x|$
 \[ F(x, \epsilon): = f(x) + \lambda \sum_{i=1}^n (|x_i| + \epsilon_i)^p,\]
 where $\epsilon \in \mathbb{R}^n_{++}$ is the perturbation vector.   
 At the $k$th iteration, PIRL1 solves the subproblem
 \[  \min_x  \nabla f(x^k)^T (x-x^k)  + \frac{\beta}{2}\|x-x^k\|^2 +\lambda \sum_{i=1}^{n}w_i^k|x_i|\] 
  with $\beta > L_f/2$ and the weight $w_i^k$ is defined as
$w_i^k:= p(|x_i^k|+\epsilon_i^k)^{p-1}$
 with $\epsilon_i \to 0$.

The framework of the PIRL1  is presented in \cref{alg.framework}.

\begin{algorithm}[H]
	\caption{Proximal Iteratively Reweighted $\ell_1$  Methods (PIRL1)}
	\label{alg.framework}
	\begin{algorithmic}[1]
		\STATE{\textbf{Input:} $\mu\in(0,1)$, $\beta > L_f/2$, $\epsilon^0\in\mathbb{R}^n_{++}$ and $x^0 $.  Set $k=0$}
		\REPEAT		
		\STATE Compute weights:  $w_i^k = p(|x^k_i|+\epsilon^k_i)^{p-1}.$ 
		\STATE Compute new iterate:
		\begin{equation}
		\begin{aligned}
		x^{k+1}  & \gets \underset{x\in \mathbb{R}^n}{\text{argmin}} \   \big\{   \nabla f(x^k)^T (x-x^k)   + \frac{\beta}{2}\|x-x^k\|^2 +\lambda \sum_{i=1}^{n}w_i^k|x_i|\big\}.
		\end{aligned}
		\end{equation}
		\STATE Choose $\epsilon^{k+1}\le \mu \epsilon^k$.
		\STATE Set $k\gets k+1$. 
		\UNTIL{convergence}
	\end{algorithmic} 
\end{algorithm}

 We make the following assumptions about  the functions in  \eqref{prob.lpproblem}. 
formulation
 \begin{assumption}\label{ass.lip}
	$f$ is Lipschitz differentiable with constant $L_f \ge 0$.
	The initial point $(x^0, \epsilon^0)$  is  such that    
	$ \Lcal(F^0):= \{ x \mid F(x)  \le F^0 := F(x^0, \epsilon^0)\} $ is contained in a bounded ball 
	$\mathbb{B}_R:= \{ x \mid \|x\|_2 \le R\}$. 
\end{assumption}

\subsection{Basic properties}

 \cite{lu2014iterative} proposed the first-order necessary condition of \eqref{prob.lpproblem} is 
 \begin{equation} \label{eq:optimalcondition}
 \nabla_if(x^*)+\lambda p |x_i^*|^{p-1} \sign( x_i^*) =0 \quad \text{for} \quad i\in\Ical(x^*),
 \end{equation}
 We call any point satisfying \eqref{eq:optimalcondition} is stationary for $F(x,0)$.

\begin{proposition}\label{prop.sign}
	Assume $\{x^k\}$ is generated by   \cref{alg.framework}  and     \cref{ass.lip} holds. Let  $\Gamma$ be the cluster point set  of $\{x^k\}$. We have the following
	\begin{enumerate}
		\item[(a)] $F(x^{k+1}, \epsilon^{k+1})\leq F(x^k,\epsilon^k) - \hat\beta \|x^{k+1}-x^k\|_2^2$ with $\hat\beta:=\beta- \frac{L_f}{2}$ and 
		$\{x^k\}\subset \Lcal(F^0) \subset \mathbb{B}_R$.
		\item[(b)]$\exists \text{ constant } \zeta$ such that $F(x^*, 0) = \zeta$, $ \forall x^* \in \Gamma$.  
		\item[(c)]$\sum\limits_{k=0}^{\infty}\|x^{k+1}-x^k\|_2^2< + \infty$.
		\item[(d)]All points in $\Gamma$ are stationary for $F(x,0)$.
	\end{enumerate}
	\end{proposition}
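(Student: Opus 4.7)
My plan for part (a) combines three ingredients. First, the $L_f$-Lipschitz continuity of $\nabla f$ yields the descent lemma
\[ f(x^{k+1}) \le f(x^k) + \nabla f(x^k)^T(x^{k+1}-x^k) + \frac{L_f}{2}\|x^{k+1}-x^k\|_2^2. \]
Second, since the subproblem objective $g_k(x) := \nabla f(x^k)^T(x-x^k) + \frac{\beta}{2}\|x-x^k\|_2^2 + \lambda \sum_{i=1}^n w_i^k |x_i|$ is $\beta$-strongly convex and $0\in \partial g_k(x^{k+1})$, one gets $g_k(x^k) \ge g_k(x^{k+1}) + \frac{\beta}{2}\|x^{k+1}-x^k\|_2^2$. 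Third, concavity of $t\mapsto (t+\epsilon_i^k)^p$ on $[0,\infty)$, whose derivative at $t=|x_i^k|$ is precisely $w_i^k$, gives $(|x_i^{k+1}|+\epsilon_i^k)^p \le (|x_i^k|+\epsilon_i^k)^p + w_i^k(|x_i^{k+1}|-|x_i^k|)$. Summing over $i$ and chaining the three inequalities yields $F(x^{k+1},\epsilon^k) \le F(x^k,\epsilon^k) - (\beta - L_f/2)\|x^{k+1}-x^k\|_2^2$. Finally, $\epsilon^{k+1} \le \mu\epsilon^k \le \epsilon^k$ combined with monotonicity of $t\mapsto (|x|+t)^p$ in $t \ge 0$ gives $F(x^{k+1},\epsilon^{k+1}) \le F(x^{k+1},\epsilon^k)$, completing the stated descent bound. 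The inclusion $\{x^k\}\subset \Lcal(F^0)$ then follows by induction from $F(x^k) \le F(x^k,\epsilon^k) \le F(x^0,\epsilon^0) = F^0$.

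Parts (b) and (c) follow quickly from (a). Continuity of $F$ on the bounded set $\Lcal(F^0)\times \mathbb{R}^n_+$ ensures that $\{F(x^k,\epsilon^k)\}$ is bounded below, and by (a) monotone, hence convergent to some limit $\zeta$. Telescoping (a) produces $\hat\beta\sum_{k=0}^\infty \|x^{k+1}-x^k\|_2^2 \le F^0 - \zeta < \infty$, which is (c). For (b), I would use $\epsilon^k \to 0$ (guaranteed by $\mu \in (0,1)$) together with joint continuity of $F(\cdot,\cdot)$ to deduce $F(x^*,0) = \lim_j F(x^{k_j},\epsilon^{k_j}) = \zeta$ for any cluster point $x^*$ with $x^{k_j}\to x^*$.

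For part (d), I would pass to the limit in the coordinatewise first-order optimality condition of the subproblem, which reads $\nabla_i f(x^k) + \beta(x_i^{k+1}-x_i^k) + \lambda w_i^k \xi_i^{k+1} = 0$ with $\xi_i^{k+1}\in \partial |x_i^{k+1}|$, for each $i$. Fix a cluster point $x^*$ with $x^{k_j}\to x^*$; by (c) we also have $x^{k_j+1}\to x^*$. For $i\in \Ical(x^*)$, eventually $x_i^{k_j+1}$ has the same sign as $x_i^*$, so $\xi_i^{k_j+1} = \sign(x_i^*)$; moreover $w_i^{k_j} = p(|x_i^{k_j}|+\epsilon_i^{k_j})^{p-1} \to p|x_i^*|^{p-1}$ since $|x_i^*|>0$ and $\epsilon_i^k\to 0$. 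Passing to the limit along this subsequence recovers \eqref{eq:optimalcondition}. For $i\notin \Ical(x^*)$ the equivalent form \eqref{eq:optimalcondition2} holds trivially because $x_i^* = |x_i^*|^p = 0$.

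The only delicate point is this last step: when $p<1$, the weights $w_i^{k_j}$ for coordinates with $x_i^*=0$ diverge to $+\infty$, so the condition in \eqref{eq:optimalcondition} cannot be passed to the limit coordinatewise at those indices. The equivalent stationarity form \eqref{eq:optimalcondition2}, obtained by multiplying through by $x_i$, absorbs this singularity and makes the limit argument routine.
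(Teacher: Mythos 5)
Your proof is correct and takes essentially the same route as the paper's: descent lemma, concavity of $(\cdot+\epsilon_i^k)^p$ to majorize the regularizer, monotonicity in $\epsilon$, telescoping for (b)--(c), and passing to the limit in the subproblem optimality conditions on the support of $x^*$ for (d). The only cosmetic difference is in (a), where you invoke $\beta$-strong convexity of the subproblem objective at its minimizer instead of the paper's explicit KKT-plus-subgradient-inequality cancellation; both yield the identical constant $\hat\beta=\beta-L_f/2$.
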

\begin{proof}
	(a).
	 Lipschitz differentiability of $f$ gives
	\begin{equation}\label{th.lip1}
	f(x^{k+1})  \leq f(x^{k}) + \nabla f(x^k)^T(x^{k+1}-x^k) + \frac{L_f}{2} \|x^{k+1}-x^k \|_2^2
	\end{equation}
	The concavity of $a^p$ on $\mathbb{R}_{++}$ gives
	$a_1^p \le a_2^p + pa_2^{p-1}(a_1- a_2)$ for any $a_1, a_2 \in \mathbb{R}_{++}$. Hence we have
	\[\begin{aligned}
	(|x_i^{k+1}|+\epsilon_i^k)^p \le &\ (|x_i^k |+\epsilon_i^k)^p + p (|x_i^k |+\epsilon_i^k)^{p-1} (|x_i^{k+1} | - |x_i^k|)\\
	= &\ (|x_i^k |+\epsilon_i^k)^p +  w_i^k (|x_i^{k+1} | - |x_i^k|).
	\end{aligned} \] 
	Summing the above inequality over  $i$ yields 
	\begin{equation}\label{w les w} 
	\sum_{i=1}^n ( |x_i^{k+1}|+\epsilon_i^k)^p   \le  \sum_{i=1}^n (|x_i^k |+\epsilon_i^k)^p +  \sum_{i=1}^n w_i^k (|x_i^{k+1} | - |x_i^k|). 
	\end{equation}
	The optimality condition of subproblems implies there exists $\xi^{k+1} \in \partial \|x^{k+1}\|_1$ such that 
	\begin{equation}\label{kkt subproblem}
	\nabla f(x^k) + \beta^k (x^{k+1}-x^k) + \lambda  w^k\circ\xi^{k+1}  = 0.
	\end{equation}
	The definition of subgradient implies
	$|y_i| \leq |x_i| + \xi_i(y_i-x_i) $ with $\xi_i\in\partial |y_i|$. Thus, we have
	\begin{equation}\label{eq:th1}
	\begin{aligned}
	&\ F(x^{k+1},\epsilon^{k+1}) - F(x^k,\epsilon^k) \\
	= &\ f(x^{k+1}) +\lambda \sum_{i=1}^{n}(|x_i^{k+1}|+\epsilon_i^k)^p - \bigg(f(x^k) +\lambda \sum_{i=1}^{n}(|x_i^k|+\epsilon_i^k)^p\bigg) \\
         \leq &\ \nabla f(x^k)^T(x^{k+1}-x^k) + \frac{L_f}{2} \|x^{k+1}-x^k \|_2^2 + \lambda \sum_{i=1}^n w_i^k (|x_i^{k+1} | - |x_i^k|) \\
         \leq &\ \nabla f(x^k)^T(x^{k+1}-x^k) +\frac{L_f}{2} \|x^{k+1}-x^k \|_2^2 + \lambda \sum_{i=1}^n w_i^k\xi_i^{k+1}(x_i^{k+1}-x_i^k) \\
	  = &\ (\nabla f(x^k)+\beta (x^{k+1}-x^k) + \lambda  w^k\circ\xi^{k+1} )^T(x^{k+1}-x^k)  -(\beta - \frac{L_f}{2})\|x^{k+1}-x^k\|_2^2 \\
	 = &\ -(\beta- \frac{L_f}{2})\|x^{k+1}-x^k\|_2^2,
	\end{aligned}
	\end{equation}
	where the first inequality follows from \eqref{th.lip1} and \eqref{w les w} and the last equality is due to \eqref{kkt subproblem}. 	
	Therefore, (a) holds true with $\hat\beta = \beta- L_f/2$. 
	


	(b). Monotonicity of $\{F(x^k, \epsilon^k)\}$ gives $\zeta:= \lim\limits_{k\to \infty \atop k\in\Scal} F(x^k,\epsilon^{k}) = F(x^*, 0)$ for any $x^*\in \Gamma$ with subsequence $\{x^k\}_{\Scal}\to x^*$.

	 (c).  From (a), we have 
	\[
	\hat\beta \sum_{k=0}^t \|x^{k+1}-x^k\|_2^2 \leq F(x^{0},\epsilon^{0})-F(x^{t+1}, \epsilon^{t+1}).
	\]
	Then, taking the limit as $t \to \infty$, 
	\[
	\hat\beta \sum_{k=0}^{\infty}\|x^{k+1}-x^k\|_2^2 \leq F(x^{0},\epsilon^{0})-\lim_{t\to \infty} F(x^{t+1},\epsilon^{t+1})<\infty.
	\]

	(d). 
	Let $x^*$ be a limit point with $\{x^k\}_{\Scal} \to x^*$. 
	The optimal condition of the $k$th subproblem  implies  
	\[
	\nabla_if(x^{k-1}) + \beta(x_i^{k}-x_i^{k-1})+ \lambda p(|x_i^{k-1}|+\epsilon_i^{k-1})^{p-1}\text{sign}(x_i^{k})  = 0,  \quad\forall i\in \Ical(x^k).
	\]
	Taking the limit on $\Scal$, we have 
	for each $   i\in \Ical(x^*)$, 
	\[\begin{aligned}
	0  = &  \lim_{k\to\infty \atop k\in\Scal} \nabla_if(x^{k-1}) + \beta(x_i^k -x_i^{k-1})+ \lambda p(|x_i^{k-1}|+\epsilon_i^{k-1})^{p-1} \text{sign}(x_i^k)  \\
	= &  \lim_{k\to\infty \atop k\in\Scal}  \nabla_if(x^k) + \beta(x_i^{k+1}-x_i^k)+ \lambda p|x_i^*|^{p-1} \text{sign}(x_i^*)  \\
	= &     \lim_{k\to\infty \atop k\in\Scal}	 \nabla_if(x^k) +\lambda p|x_i^*|^{p-1} \text{sign}(x_i^*) \\
	=   &   \nabla_if(x^*) + \lambda p |x_i^*|^{p-1} \sign(x_i^*). 
	\end{aligned} \]
	Here the second equality is from $\epsilon_i^k\to 0$ for all $i\in \Ical(x^*)$.
	Therefore, $x^*$ is a stationary point of $F(x,0)$.

\end{proof}

%
%
%
%
%
%
%
%

Algorithm \ref{alg.framework} belongs to the framework of iteratively reweighted $\ell_1$  methods proposed 
in \cite{wang2019relating}.   
From \cite{wang2019relating}, the following properties  hold true.

\begin{theorem}\cite[Theorem 1]{wang2019relating}\label{thm.stable.support} Assume   \cref{ass.lip}   holds and let $\{(x^k, \epsilon^k)\}$ be a sequence generated by   \cref{alg.framework}.  Define constant
  $C=\sup_{x\in\mathbb{B}_R}\|\nabla f(x)\|_2 + 2R\beta$.  Then we have the following
	\begin{enumerate}
		\item[(i)]   If $w(x_i^{\tilde k}, \epsilon_i^{\tilde k}) > C/\lambda$ for some ${\tilde k}\in \mathbb{N}$,  then  $x_i^k \equiv 0$ for all $k > \tilde k$. Conversely, 
		if there exists $\hat k>\tilde k$ for any $\tilde k \in\mathbb{N}$ such that $x_i^{\hat k} \neq 0$,  then $w_i^k \le  C/\lambda$ for all $k\in\mathbb{N}$. 
		\item[(ii)]       There  exist index sets $\Ical^*\cup \Acal^* = \{1,\ldots,n\}$ and $\bar k > 0$, such that  $\forall \  k> \bar k$,  
		$\Ical(x^k)\equiv \Ical^*$ and $\Acal(x^k) \equiv \Acal^*$.
		\item[(iii)] 	For any $i\in\Ical^*$,  there holds that 
		\begin{equation}\label{eq.xboundeps} |x_i^k| > \left(\frac{C}{p \lambda }\right)^{\frac{1}{p-1}} - \epsilon_i^k > 0,\quad   i\in\Ical^*. \end{equation} 
		Therefore, $\{|x_i^k|, i\in\Ical^*, k\in\mathbb{N}\}$ are bounded away from 0 after some $\hat k \in \mathbb{N}$. 
		\item[(iv)]  For any cluster point $x^*$ of $\{x^k\}$, it holds that $\Ical(x^*) = \Ical^*$, $\Acal(x^*) = \Acal^*$ and  
		\begin{equation}\label{eq.xbound} |x^*_i| \ge   \left(\frac{C}{p \lambda }\right)^{\frac{1}{p-1}},\quad i\in \Ical^*. \end{equation}
	\end{enumerate}
\end{theorem}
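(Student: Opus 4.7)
The plan is to attack the four parts sequentially; the heavy lifting is concentrated in (i), and parts (ii)--(iv) will fall out by a clean bookkeeping argument together with $\epsilon^k \to 0$. The central tool throughout is the closed-form solution of the subproblem in Algorithm~\ref{alg.framework}, which decouples across coordinates into a scalar soft-thresholding.

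For (i), I would first record that the separable subproblem admits the explicit solution
\[
x_i^{k+1} \;=\; \sign\!\bigl(x_i^k - \tfrac{1}{\beta}\nabla_i f(x^k)\bigr)\,\max\!\Bigl\{\bigl|x_i^k - \tfrac{1}{\beta}\nabla_i f(x^k)\bigr| - \tfrac{\lambda w_i^k}{\beta},\,0\Bigr\}.
\]
Hence $x_i^{k+1}\neq 0$ forces $\lambda w_i^k < |\beta x_i^k - \nabla_i f(x^k)|$. By Assumption~\ref{ass.lip} and Proposition~\ref{prop.sign}(a) the iterates stay in $\mathbb{B}_R$, so the right side is bounded by $\|\nabla f(x^k)\|_2 + 2R\beta \le C$; therefore $w_i^{\tilde k} > C/\lambda$ immediately yields $x_i^{\tilde k+1}=0$. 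To propagate forward, I would note that whenever $x_i^k=0$ the weight simplifies to $w_i^k = p(\epsilon_i^k)^{p-1}$, and the update $\epsilon_i^{k+1}\le \mu \epsilon_i^k$ together with $p-1<0$ gives $w_i^{k+1} \ge \mu^{\,p-1} w_i^k > w_i^k$. A one-line induction then delivers $x_i^k\equiv 0$ for all $k>\tilde k$. The converse is the contrapositive: if $x_i^{\hat k}\neq 0$ for arbitrarily large $\hat k$, no weight can exceed $C/\lambda$.

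For (ii)--(iv), I would partition the indices into
\[
\Acal^*:=\{i : \exists\,\tilde k\ \text{with}\ w_i^{\tilde k}>C/\lambda\},\qquad \Ical^*:=\{1,\dots,n\}\setminus \Acal^*.
\]
Part~(i) gives $x_i^k=0$ for all large $k$ whenever $i\in\Acal^*$. For $i\in \Ical^*$ the uniform bound $w_i^k\le C/\lambda$ inverts, using $p-1<0$, to
\[
|x_i^k|+\epsilon_i^k \;\ge\; \bigl(C/(p\lambda)\bigr)^{1/(p-1)},
\]
and combined with $\epsilon_i^k\to 0$ this yields $|x_i^k|>\bigl(C/(p\lambda)\bigr)^{1/(p-1)}-\epsilon_i^k>0$ past some $\hat k$. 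This proves (ii) and (iii) in one pass. For (iv) I would take any subsequence $\{x^k\}_{\Scal}\to x^*$: the support identity $\Ical(x^*)=\Ical^*$, $\Acal(x^*)=\Acal^*$ is inherited because the support is eventually constant, and the pointwise lower bound survives the limit (with $\epsilon^k\to 0$) to give $|x_i^*|\ge \bigl(C/(p\lambda)\bigr)^{1/(p-1)}$.

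The main obstacle I anticipate is the subtle interplay in (i) between the sparsity-forcing step and the $\epsilon$-update: one must show not only that a supercritical weight knocks the next iterate to zero but that subsequent zeros \emph{amplify} rather than erode the weight, so the property persists. The cooperation of $\mu\in(0,1)$ with $p-1<0$ does exactly this for free, so once set up correctly the induction is essentially one line; the remaining care is to distinguish a "one-shot" zeroing from a persistent one when defining $\Acal^*$ and $\Ical^*$, which is why the converse statement in (i) is needed before the partition in (ii) is meaningful.
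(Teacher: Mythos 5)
Your proposal is correct, but note that the paper itself offers no proof of this theorem: it is imported verbatim as \cite[Theorem 1]{wang2019relating}, so there is no in-paper argument to compare against. What you supply is a self-contained derivation whose engine is the closed-form soft-thresholding solution of the separable proximal subproblem, and the logic holds up: the thresholding test $\lambda w_i^k < |\beta x_i^k - \nabla_i f(x^k)|$ combined with $\{x^k\}\subset\mathbb{B}_R$ (from Proposition \ref{prop.sign}(a)) gives the one-step zeroing, and the monotonicity of $t\mapsto t^{p-1}$ together with $\epsilon_i^{k+1}\le\mu\epsilon_i^k\le |x_i^k|+\epsilon_i^k$ shows the weight can only grow once the coordinate is annihilated, which closes the induction. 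Your key observation --- that the cooperation of $\mu\in(0,1)$ with $p-1<0$ makes the supercritical-weight condition self-reinforcing --- is exactly the mechanism that makes the active set $\Acal^*$ absorbing, and the partition you define then yields (ii)--(iv) by inverting $w_i^k\le C/\lambda$ and letting $\epsilon^k\to 0$. Two cosmetic points: for the base step of the induction (where $x_i^{\tilde k}$ may be nonzero) you need the comparison $\epsilon_i^{\tilde k+1}\le |x_i^{\tilde k}|+\epsilon_i^{\tilde k}$ rather than the pure-$\epsilon$ amplification you state, though this is the same monotonicity argument; and your inversion produces $|x_i^k|\ge (C/(p\lambda))^{1/(p-1)}-\epsilon_i^k$ with a non-strict inequality where \eqref{eq.xboundeps} asserts a strict one, a discrepancy of no consequence for how the theorem is used downstream.
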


The above theorem shows locally the support of the iterates remains unchanged and the nonzeros are bounded away from 0.  The 
next theorem shows that the signs of iterates stay stable locally.

\begin{theorem}\cite[Theorem 2]{wang2019relating}\label{thm.stable.sign}  
	Let $\{x^k\}$ be a sequence generated by   \cref{alg.framework} and    \cref{ass.lip} is satisfied.  
	There exists $\bar k  \in \mathbb{N}$, such that 
	the sign of $\{x^k\}$ are fixed for all $k > \bar k$, i.e.,  
	$\text{sign}(x^k)\equiv s$ for some  $s \in \{-1,0,+1\}^n$. 
\end{theorem}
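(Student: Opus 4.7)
The plan is to combine the support-stabilization and bounded-away-from-zero conclusions of Theorem \ref{thm.stable.support} with the square-summability of steps from Proposition \ref{prop.sign}(c) to rule out sign flips after finitely many iterations.

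First I would dispose of the inactive indices: by Theorem \ref{thm.stable.support}(ii), there exists $\bar k_1$ such that for all $k > \bar k_1$, $\Acal(x^k) \equiv \Acal^*$, so $x_i^k = 0$ for every $i \in \Acal^*$ and hence $\text{sign}(x_i^k) \equiv 0$ on $\Acal^*$ trivially.

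Next I would treat the active indices $i \in \Ical^*$. By Theorem \ref{thm.stable.support}(iii), there exists $\hat k$ and a constant
\[\rho := \tfrac{1}{2}\left(\tfrac{C}{p\lambda}\right)^{\frac{1}{p-1}} > 0\]
(using $\epsilon_i^k \to 0$ geometrically and taking $\hat k$ large enough that $\epsilon_i^k$ is smaller than half the right-hand side of \eqref{eq.xboundeps}) such that $|x_i^k| \ge \rho$ for all $k > \hat k$ and all $i \in \Ical^*$. From Proposition \ref{prop.sign}(c), $\sum_k \|x^{k+1} - x^k\|_2^2 < \infty$ forces $\|x^{k+1} - x^k\|_2 \to 0$, so there exists $\bar k_2$ with $\|x^{k+1} - x^k\|_2 < 2\rho$ for all $k > \bar k_2$. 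Setting $\bar k := \max\{\bar k_1, \hat k, \bar k_2\}$, I would then argue by contradiction: if $\text{sign}(x_i^{k+1}) \neq \text{sign}(x_i^k)$ for some $i \in \Ical^*$ and some $k > \bar k$, both $x_i^k$ and $x_i^{k+1}$ are nonzero (since $i \in \Ical^* = \Ical(x^k) = \Ical(x^{k+1})$) with opposite signs, so
\[|x_i^{k+1} - x_i^k| = |x_i^k| + |x_i^{k+1}| \ge 2\rho,\]
contradicting $|x_i^{k+1} - x_i^k| \le \|x^{k+1} - x^k\|_2 < 2\rho$. Hence the sign of $x_i^k$ is constant for $k > \bar k$ on $\Ical^*$, and combining with the first paragraph yields the desired vector $s \in \{-1, 0, +1\}^n$.

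The only delicate point is ensuring the lower bound $|x_i^k| \ge \rho$ is uniform in both $k$ and $i$; since \eqref{eq.xboundeps} only gives this after the perturbations $\epsilon_i^k$ drop below a threshold, I would just invoke $\epsilon^{k+1} \le \mu \epsilon^k$ from the algorithm to pick $\hat k$ explicitly. The rest is routine, and no additional machinery beyond Proposition \ref{prop.sign} and Theorem \ref{thm.stable.support} is needed.
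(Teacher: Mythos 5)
Your argument is correct. Note that the paper itself does not prove Theorem~\ref{thm.stable.sign}; it is imported verbatim from \cite[Theorem 2]{wang2019relating}, so there is no in-paper proof to compare against. Your route --- combining the support stabilization and the uniform lower bound $|x_i^k|\ge\rho$ on $\Ical^*$ from Theorem~\ref{thm.stable.support}(ii)--(iii) with $\|x^{k+1}-x^k\|_2\to 0$ from Proposition~\ref{prop.sign}(c), then ruling out a sign flip via $|x_i^{k+1}-x_i^k|=|x_i^{k+1}|+|x_i^k|\ge 2\rho$ --- is the standard and essentially inevitable way to establish sign stability in this setting, and every ingredient you invoke is available within the paper. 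The one point you rightly flag, making the lower bound uniform in $i$ and $k$ by waiting until $\epsilon_i^k$ drops below half of $(C/(p\lambda))^{1/(p-1)}$, is handled correctly since $\epsilon^{k+1}\le\mu\epsilon^k$ forces $\epsilon^k\to 0$ componentwise.
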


\subsection{Kurdyka-\L ojasiewicz property}\label{sec.KL}

 \cite{attouch2013convergence} have proved a series of convergence results of descent methods for semi-algebraic  problems under the assumption that the objective satisfies the Kurdyka-\L ojasiewicz (KL) property. In fact, this assumption covers a wide range of problems such as nonsmooth semi-algebraic minimization problem \citep{Bolte2014}.  The definition of  KL property  is given below.

\begin{definition}[Kurdyka-\L ojasiewicz property]\label{df:KL}
	The function $f:\mathbb{R}^n\to \mathbb{R}\cup\{+\infty\}$ is said to have the Kurdyka-\L ojasiewicz property at $x^*\in \text{dom}\bar\partial f$ if there exists $\eta\in (0,+\infty]$, a neighborhood $U$ of $x^*$ and a continuous concave function $\phi:[0,\eta) \to \mathbb{R}_+$ such that:
	\begin{enumerate}
		\item[(i)] $\phi(0)=0$,
		\item[(ii)] $\phi$ is $C^1$ on $(0,\eta)$,
		\item[(iii)] for all $s\in(0,\eta)$, $\phi'(s)>0$,
		\item[(iv)] for all $x$ in $U\cap [f(x^*)<f<f(x^*)+\eta]$, the Kurdyka-\L ojasiewicz inequality holds
		\[
		\phi'(f(x)-f(x^*))\text{dist}(0, \bar\partial f(x))\geq 1.
		\]
	\end{enumerate}   
\end{definition}
If $f$ is smooth, then condition (iv) reverts to \cite{attouch2013convergence}
\[\| \nabla (\phi\circ f)(x)\| \ge 1.\]

Since for sufficiently large $k$, the iterates $\{x^k_{\Ical^*}\}$ remains in the same orthant of $\mathbb{R}^{|\Ical^*|}$ and are bounded away from
 the axis, or equivalently,  \[ \{x^k_{\Ical^*} \}\in \Omega \subset  \mathbb{R}^{|\Ical^*| }_s \] where 
 $\Omega$ is in the interior of an orthant and is bounded away from the axis.  
 To further analyze the property of iterates $\{(x^k,\epsilon^k)\}$,  denote $\delta_i = \sqrt{\epsilon_i}$ .
 Therefore, we can write $F(x, \delta)$ as a function of $(x, \delta)$  for simplicity. 
 We can assume the reduced function $F(x_{\Ical^*}, \delta_{\Acal^*})$ has the KL property at $(x^*_{\Ical^*}, 0_{\Ical^*})$.  
 In fact, we only need to make assumption on $f$.  To see this, we introduce the concept of semi-algebraic functions, which 
 is a weak condition and can cover most common functions.

 \begin{definition}[Semi-algebraic functions] A subset of $\mathbb{R}^n$ is called semi-algebraic if it can be written as 
 a finite union of sets of the form 
 \[\{ x\in \mathbb{R}^n : h_i(x) = 0, \  q_i(x)<0, \  i=1,\ldots, p\},\]
 where $h_i, q_i$ are real polynomial functions.  A function $f: \mathbb{R}^n \to \mathbb{R}\cup\{+\infty\}$ is semi-algebraic 
 if its graph is a semi-algebraic subset of $\mathbb{R}^{n+1}$.  
 \end{definition}

 Semi-algebraic functions satisfy KL property with $\phi(x) = cs^{1-\theta}$, for some $\theta\in[0,1)\cap \mathbb{Q}$ and some $c> 0$  \citep{boltedanlew17,bolte2007clarke}. This non-smooth result 
 generalizes the famous {\L}ojasiewicz inequality for real-analytic function \citep{lojasiewicz1963propriete}.  Finite sums of semi-algebraic 
 functions are semi-algebraic; for $p\in\mathbb{Q}$, 
 $\sum_{i\in\Ical^*} (|x_i|+\epsilon_i)^p$ is semi-algebraic  around $(x^*_{\Ical^*}, 0_{\Acal^*})$ by \citep{wakabayashi2008remarks}.   
  Therefore, we only need to assume 
 $f(x_{\Ical^*})$ is semi-algebraic in a neighborhood around $x^*$.

 We state this assumption formally below. 
 
 \begin{assumption}\label{ass.KL}  Suppose $p\in\mathbb{Q}$ and $f(x_{\Ical^*})$ is semi-algebraic in $ \mathbb{R}^{|\Ical^*| }_s$, where $x^*$  is a limit point of $\{x^k\}$ generated by the PIRL1 methods. 
 \end{assumption} 
 


For simplicity of the following analysis and without loss of generality,  we assume $\Ical^* = \{1, ..., n\}$ and $\Acal^* = \emptyset$, so that 
 for sufficiently large $k$, 
the iterates $\{x^k_{\Ical^*}\}$ remains in the same orthant  are bounded away from
 the axis.

\section{The uniqueness of limit points}

We investigate the uniqueness of limit points 
under KL property of $F$.

\begin{lemma}\label{lem.acc.gc2}
	Let $\{x^k\}$ be a sequence generated by \cref{alg.framework}. 
	The following statements hold.
	\begin{enumerate}
		\item[(i)]  There exists $D_1>0$ such that for all $k$
		\begin{equation*}\label{eq:acc.gc4}
		\|\nabla F(x^k, \delta^k)\|_2 \leq D_1(\|x^k-x^{k-1}\|_2 +\|\delta^{k-1}\|_1-\|\delta^{k}\|_1),
		\end{equation*}
		and $\lim\limits_{k\to \infty} \|\nabla F(x^k,\delta^k)\|_2=0$. 
		\item[(ii)]   $\{F (x^k, \delta^k)\}$ is monotonically decreasing, and there exists   $\hat\beta>0$ such that 
		\begin{equation*}\label{eq:acc.gc5}
		F(x^{k+1},\delta^{k+1})-F(x^k, \delta^k) \geq \hat\beta\|x^{k+1}-x^{k}\|^2_2.
		\end{equation*}
		\item[(iii)]  $F( x^*,0)= \zeta = \lim\limits_{k\to\infty}F(x^k,\delta^{k})$ for all $x^*\in\Gamma$,  where $\Gamma$ is the set of 
		the cluster points of  $\{x^k\}$. 
	\end{enumerate}
\end{lemma}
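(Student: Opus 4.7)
The plan is to establish the three claims in sequence, noting that parts (ii) and (iii) transcribe directly from the reduced-space version of Proposition \ref{prop.sign} via the reparameterization $\epsilon_i = \delta_i^2$, while part (i) is the main technical content and requires a careful bound on the subproblem's optimality residual together with an estimate for the $\delta$-gradient.

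For (ii), since $F(x^k, \delta^k) = F(x^k, \epsilon^k)$ by construction, the sufficient-decrease estimate of Proposition \ref{prop.sign}(a) applies verbatim with $\hat\beta = \beta - L_f/2 > 0$, yielding both monotone decrease of $\{F(x^k, \delta^k)\}$ and the quadratic bound on the decrement. For (iii), the monotone sequence has a common limit $\zeta$, and because $\epsilon^k \le \mu^k \epsilon^0$ forces $\delta^k \to 0$, for any cluster point $x^*\in\Gamma$ the continuity of $F$ at $(x^*,0)$---valid since the iterates remain in the interior of a fixed orthant by Theorems \ref{thm.stable.support} and \ref{thm.stable.sign}---gives $F(x^*, 0) = \zeta$.

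The heart of the argument is (i). Since $\mathrm{sign}(x^k) \equiv s$ for large $k$ and $|x_i^k|$ is bounded away from zero, $F(\cdot,\cdot)$ is smooth on a neighborhood of the iterates, with
\[ \nabla_{x_i} F(x^k,\delta^k) = \nabla_i f(x^k) + \lambda w_i^k s_i, \qquad \nabla_{\delta_i} F(x^k, \delta^k) = 2\lambda w_i^k \delta_i^k, \]
where $w_i^k = p(|x_i^k| + (\delta_i^k)^2)^{p-1}$. I would rewrite $\nabla_x F(x^k,\delta^k)$ using the subproblem KKT condition $\nabla f(x^{k-1}) + \beta(x^k - x^{k-1}) + \lambda w^{k-1}\circ s = 0$ to obtain
\[ \nabla_x F(x^k, \delta^k) = \bigl(\nabla f(x^k) - \nabla f(x^{k-1})\bigr) - \beta(x^k - x^{k-1}) + \lambda(w^k - w^{k-1})\circ s. \]
Lipschitz differentiability of $f$ handles the first two terms at cost $(L_f + \beta)\|x^k - x^{k-1}\|_2$. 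For the weight difference, the scalar map $(t,u)\mapsto p(t+u^2)^{p-1}$ is $C^1$ on $\{t > \tau\}\times \mathbb{R}$ for any $\tau > 0$; since $|x_i^k| \ge \tau > 0$ uniformly by Theorem \ref{thm.stable.support}(iii) and $\delta_i^k$ is bounded, one secures a uniform Lipschitz constant $C_w$ with $|w_i^k - w_i^{k-1}| \le C_w\bigl(|x_i^k - x_i^{k-1}| + |\delta_i^{k-1} - \delta_i^k|\bigr)$, using $|(\delta_i^{k-1})^2 - (\delta_i^k)^2| = (\delta_i^{k-1} + \delta_i^k)|\delta_i^{k-1} - \delta_i^k|$ together with boundedness of $\delta$.

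To absorb $\|\nabla_\delta F(x^k, \delta^k)\|_2 = 2\lambda\|w^k\circ \delta^k\|_2$ into the claimed upper bound, I would exploit the rule $\delta_i^k \le \sqrt{\mu}\,\delta_i^{k-1}$ (equivalent to $\epsilon^k \le \mu\epsilon^{k-1}$), which rearranges to $\delta_i^k \le \tfrac{\sqrt{\mu}}{1-\sqrt{\mu}}(\delta_i^{k-1} - \delta_i^k)$; together with the uniform upper bound on $w_i^k$ this controls $\|\nabla_\delta F\|_2$ by a constant multiple of $\|\delta^{k-1}\|_1 - \|\delta^k\|_1$. Combining the $x$- and $\delta$-components and passing from $\ell_1$-sums to $\ell_2$-norms produces the advertised constant $D_1$. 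Convergence $\|\nabla F(x^k, \delta^k)\|_2 \to 0$ then follows from $\|x^k - x^{k-1}\|_2 \to 0$ (a consequence of the summability in Proposition \ref{prop.sign}(c)) and $\|\delta^k\|_1 \to 0$. The chief subtlety is ensuring that both the Lipschitz constant for $w_i$ and the uniform bound on $w_i^k$ are \emph{uniform} in $k$; this hinges critically on the lower bound for $|x_i^k|$ supplied by Theorem \ref{thm.stable.support}(iii), without which $(t,u)\mapsto p(t+u^2)^{p-1}$ would blow up as $t,u\to 0$.
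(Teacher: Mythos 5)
Your proposal is correct and follows essentially the same route as the paper: parts (ii) and (iii) are read off from Proposition \ref{prop.sign}, and part (i) is proved by substituting the $(k-1)$th subproblem's KKT condition into $\nabla_x F(x^k,\delta^k)$, bounding the weight difference via the uniform lower bound on $|x_i^k|+(\delta_i^k)^2$ from Theorem \ref{thm.stable.support} (the paper does this with an explicit mean-value estimate rather than your Lipschitz phrasing, but it is the same bound), and controlling $\nabla_\delta F$ through $w_i^k \le C/\lambda$ and $\delta^k \le \sqrt{\mu}\,\delta^{k-1}$.
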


\begin{proof}
(i)
	The gradient of  $F$ at $(x^k, \delta^k)$ is 
	\begin{equation}
	\label{grad.xydelta}
	\begin{aligned}
	\nabla_x F(x^k,\delta^k) & =\nabla f(x^k)+\lambda w^k\circ  \text{sign}(x^{k}), \\
	\nabla_\delta F(x^k, \delta^k) & = 2\lambda w^k \circ \delta^k.
	\end{aligned}
	\end{equation}

	We first derive an upper bound for $\|\nabla_x F(x^k,\delta^k)\|_2$. The first-order optimality condition of the $(k-1)$th subproblem at $x^k$ is 

	\begin{equation*}
	\label{eq:acc.gc2}
	\nabla f(x^{k-1})+\beta^k(x^{k}-x^{k-1})+\lambda w^{k-1}\circ \text{sign}(x^{k}) = 0. 
	\end{equation*}
	Hence, we have
	\begin{equation}
	\label{eq:acc.gc3}
	\begin{aligned}
	 \nabla_xF(x^k,\delta^k) 
	=  \nabla f(x^k)-\nabla f(x^{k-1}) - \beta^k(x^{k}-x^{k-1})+\lambda (w^k-w^{k-1})\circ \text{sign}(x^{k}).
	\end{aligned}
	\end{equation}
	By the Lipschitz property of $f$, the first two terms in  \eqref{eq:acc.gc3} is bounded by 
	\[ \| \nabla f(x^k)-\nabla f(x^{k-1}) - \beta^k(x^{k}-x^{k-1}) \|_2
	\le (L_f + \beta  )\|x^k-x^{k-1}\|_2. \]
	 Now we give an upper bound for the third term.
	It follows from Lagrange's mean value theorem that $\exists \ z_i^k$ between $|x_i^k|+(\delta_i^{k})^2$ and $|x_i^{k-1}|+(\delta_i^{k-1})^2$, such that
	\begin{equation*}
	\begin{aligned}
	\big| (w_i^k-w_i^{k-1})\cdot \text{sign}(x_i^k) \big|&= \big| w_i^k-w_i^{k-1}\big| \\
	& =   \big|p(|x_i^k|+(\delta_i^{k})^2)^{p-1}-p(|x_i^{k-1}|+(\delta_i^{k-1})^2)^{p-1}\big|\\
	& =   \big|p(1-p)(z_i^k)^{p-2} (|x_i^k|-|x_i^{k-1}|+(\delta_i^{k})^2-(\delta_i^{k-1})^2)\big|\\
	&\leq   p(1-p)(z_i^k)^{p-2} (|x_i^k-x_i^{k-1}|+ (\delta_i^{k-1})^2-(\delta_i^{k})^2 )\\
	&\leq   p(1-p)(z_i^k)^{p-2} (|x_i^k-x_i^{k-1}|+ 2\delta_i^0(\delta_i^{k-1}-\delta_i^{k} )) \\
	&\leq   p(1-p) \left( \frac{p\lambda}{C}\right)^{\frac{p-2}{1-p}} (|x_i^k-x_i^{k-1}|+ 2\delta_i^0(\delta_i^{k-1}-\delta_i^{k} )),
	\end{aligned}
	\end{equation*}	
	where the first equality is by the fact that $x_i^k\neq 0$ and the last inequality by observing the following. From \cref{thm.stable.support}(i), we know  
	\begin{equation}
	\begin{aligned}	
	|x_i^k|+(\delta_i^{k})^2 = (\frac{w_i^k}{p})^{\frac{1}{p-1}} \ge (\frac{C}{p\lambda})^{\frac{1}{p-1}} =  (\frac{p\lambda}{C})^{\frac{1}{1-p}} \\
	|x_i^{k-1}|+(\delta_i^{k-1})^2 = (\frac{w_i^{k-1}}{p})^{\frac{1}{p-1}} \ge (\frac{C}{p\lambda})^{\frac{1}{p-1}} = (\frac{p\lambda}{C})^{\frac{1}{1-p}} ,
\end{aligned}
	\end{equation}
	hence 
	\[
	(z_i^k)^{p-2} \le  \left( \frac{p\lambda}{C}\right)^{\frac{p-2}{1-p}}.
	\]
	Now we can obtain an upper bound for the third term in  \eqref{eq:acc.gc3},
	\begin{equation}\label{eq.nablaf.3}
	\begin{aligned}
	   \| (w^k-w^{k-1})\circ \text{sign}(x^k) \|_2 
	\leq  &\ 	\| (w^k-w^{k-1})\circ \text{sign}(x^k) \|_1 \\
	= & \sum_{i=1}^{n} \big| (w_i^k-w_i^{k-1})\cdot \text{sign}(x_i^k) \big| \\
	\leq & \sum_{i=1}^n  p(1-p) \left( \frac{p\lambda}{C}\right)^{\frac{p-2}{1-p}} (|x_i^k-x_i^{k-1}|+ 2\delta_i^0(\delta_i^{k-1}-\delta_i^{k} )) \\
	\leq   &\ \bar D \left(\|x^k-x^{k-1}\|_1 + 2\|\delta^0\|_\infty( \|\delta^{k-1}\|_1 - \|\delta^{k}\|_1)\right)\\
	\leq   &\  \bar D\left(\sqrt{n} \|x^k-x^{k-1}\|_2 + 2\|\delta^0\|_\infty( \|\delta^{k-1}\|_1 - \|\delta^{k}\|_1)\right),	\end{aligned}
	\end{equation}
	where  $\bar D : = p(1-p)\left( \tfrac{p\lambda}{C}\right)^{\frac{p-2}{1-p}}$.  Putting together the bounds for all three terms in  \eqref{eq:acc.gc3}, we have 
	\begin{equation}
	\label{first.bound}
	\begin{aligned}
	\| \nabla_xF(x^k,\delta^k)\|_2 
	\le &\  (L_f +  \beta  )\|x^k-x^{k-1}\|_2  +  2\bar D\|\delta^0\|_\infty (\|\delta^{k-1}\|_1 - \|\delta^{k}\|_1).
	\end{aligned}
	\end{equation}
	On the other hand, \begin{equation}\label{third.bound}
	\begin{aligned}
	\| \nabla_\delta F(x^k, \delta^k) \|_2 & \le \| \nabla_\delta F(x^k, \delta^k) \|_1\\
	& = \sum_{i=1}^n 2\lambda w_i^k \delta_i^k\\
	& \le \sum_{i=1}^n 2\lambda \tfrac{C}{\lambda} \frac{\sqrt{\mu}}{1-\sqrt{\mu}} (\delta_i^{k-1}-\delta_i^k)\\
	& \leq  \frac{2 C\sqrt{\mu}}{1-\sqrt{\mu}}(\|\delta^{k-1}\|_1-\|\delta^k\|_1), 
	\end{aligned}	
	\end{equation}	
	where the second inequality is by \cref{thm.stable.support}(i) and $\delta^k \leq \sqrt{\mu} \delta^{k-1}$. 
	Overall,  we obtain from \eqref{first.bound} and \eqref{third.bound} 
	that Part (i) holds true by setting 
	\[ D_1 = \max \left( \beta  + L_f,
	2\bar C \|\delta^0\|_\infty+  \tfrac{2 C\sqrt{\mu}}{1-\sqrt{\mu}} \right).
	\]
	
	Part (ii) and (iii) follows directly from \cref{prop.sign}(a) 
	and \cref{prop.sign}(b), respectively. 
\end{proof}

Now we are ready to prove the global convergence under KL property.

\begin{theorem}\label{lem.acc.gc2.conv}
	Let $\{x^k\}$ be a sequence generated by   \cref{alg.framework} and $F$ is a KL function at   $(x^*,0)$ with $x^*\in\Gamma$.
	Then    $\{x^k\}$ converges to a stationary point of $F(x,0)$; moreover, $$\sum_{k=0}^{\infty}\|x^{k+1}-x^k \|_2<\infty.$$
\end{theorem}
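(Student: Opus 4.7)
The plan is to follow the standard Attouch--Bolte--Svaiter template for KL-based global convergence, combining the descent estimate and the gradient bound supplied by Lemma~\ref{lem.acc.gc2} with the KL inequality at $(x^*,0)$. Under the reduction stated right before the theorem ($\Ical^* = \{1,\ldots,n\}$, iterates in the interior of a fixed orthant, bounded away from the axis), the perturbed objective $F(x,\delta)$ is $C^1$ on an open neighborhood of $(x^*,0)$, so the limiting subdifferential in Definition~\ref{df:KL}(iv) collapses to $\{\nabla F(x,\delta)\}$ and $\dist(0,\bar\partial F(x^k,\delta^k))=\|\nabla F(x^k,\delta^k)\|_2$.

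First I would argue that $(x^k,\delta^k)$ lies in the KL neighborhood $U$ of $(x^*,0)$ for all sufficiently large $k$. A subsequence $(x^{k_j},\delta^{k_j})\to (x^*,0)$ exists by definition of $\Gamma$ and because $\delta^k\to 0$; monotonicity from Lemma~\ref{lem.acc.gc2}(ii)--(iii) gives $F(x^k,\delta^k)\downarrow \zeta = F(x^*,0)$. If equality $F(x^{k_0},\delta^{k_0})=\zeta$ occurs, then descent forces $x^{k+1}=x^k$ for all $k\ge k_0$ and the claim is immediate. Otherwise $F(x^k,\delta^k)\in (\zeta,\zeta+\eta)$ eventually, so the KL inequality applies at $(x^k,\delta^k)$.

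Next, set $\Delta_k:=\phi(F(x^k,\delta^k)-\zeta)$. Concavity of $\phi$ combined with Lemma~\ref{lem.acc.gc2}(ii) gives
\begin{equation*}
\Delta_k-\Delta_{k+1}\ \ge\ \phi'\bigl(F(x^k,\delta^k)-\zeta\bigr)\,\hat\beta\,\|x^{k+1}-x^k\|_2^2.
\end{equation*}
The KL inequality together with Lemma~\ref{lem.acc.gc2}(i) yields
\begin{equation*}
\phi'\bigl(F(x^k,\delta^k)-\zeta\bigr)\ \ge\ \frac{1}{\|\nabla F(x^k,\delta^k)\|_2}\ \ge\ \frac{1}{D_1\bigl(\|x^k-x^{k-1}\|_2+\|\delta^{k-1}\|_1-\|\delta^k\|_1\bigr)}.
\end{equation*}
Multiplying these two displays and applying $2\sqrt{ab}\le a+b$ produces the key one-step inequality
\begin{equation*}
2\|x^{k+1}-x^k\|_2\ \le\ \|x^k-x^{k-1}\|_2+\bigl(\|\delta^{k-1}\|_1-\|\delta^k\|_1\bigr)+\tfrac{D_1}{\hat\beta}\bigl(\Delta_k-\Delta_{k+1}\bigr).
\end{equation*}

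Finally I would sum this inequality from some large $k_0$ up to $N$. The middle term telescopes to $\|\delta^{k_0-1}\|_1$, the $\Delta_k$ term telescopes to $\Delta_{k_0}$, and the leading $\|x^k-x^{k-1}\|$ differs from the left-hand side by a single boundary term, so rearranging gives a uniform bound
$\sum_{k=k_0}^N\|x^{k+1}-x^k\|_2 \le \|x^{k_0}-x^{k_0-1}\|_2 + \|\delta^{k_0-1}\|_1 + \tfrac{D_1}{\hat\beta}\Delta_{k_0}$,
independent of $N$. This establishes summability of the step lengths, hence $\{x^k\}$ is Cauchy and converges to some limit $\bar x$; since $\bar x\in\Gamma$, Proposition~\ref{prop.sign}(d) identifies $\bar x$ as stationary for $F(\cdot,0)$.

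The main obstacle is the bookkeeping around the ``extra'' perturbation term $\|\delta^{k-1}\|_1-\|\delta^k\|_1$ appearing in the gradient bound of Lemma~\ref{lem.acc.gc2}(i): it is not a standard ingredient in the classical Attouch--Bolte argument, but because $\{\|\delta^k\|_1\}$ is monotone decreasing with bounded total variation $\|\delta^0\|_1$, it behaves exactly like a telescoping error and can be absorbed cleanly after the $\sqrt{ab}$ step. A secondary care point is guaranteeing the iterates remain inside $U$ for all large $k$; this is standard but requires shrinking the neighborhood so the cumulative step-length bound does not allow the iterates to escape, a claim one typically proves in tandem with the summability bound by an induction on $N$.
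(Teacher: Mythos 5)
Your proposal is correct and follows essentially the same route as the paper's proof: the KL inequality at the limit set, the gradient bound and descent estimate from Lemma~\ref{lem.acc.gc2}, concavity of $\phi$, the AM--GM step, and a telescoping sum absorbing both the boundary step term and the monotone $\|\delta^k\|_1$ term. The only differences are cosmetic constants in the AM--GM rearrangement (the paper gets a factor $3/4$ on the left, you get $1$) and your extra caution about keeping iterates in $U$, which the paper sidesteps by taking $U$ to be a uniform tube around the whole compact limit set $\bar\Gamma$ so that $\dist((x^k,\delta^k),\bar\Gamma)\to 0$ suffices without induction.
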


\begin{proof}
	By \cref{prop.sign}, every cluster point is stationary for $F(x,0)$,  it is sufficient to show that 
	$\{x^k\}$ has a unique cluster point.  

	By \cref{lem.acc.gc2},   $F(x^k,\delta^{k})$ is monotonically decreasing and converging to $\zeta$.  
	If $F(x^k,\delta^{k})= \zeta$ after some $k_0$, then from \cref{lem.acc.gc2}(ii), 
	we know $x^{k+1} = x^k$ for all $k > k_0$, meaning $x^k \equiv x^{k_0} \in \Gamma$, so that the proof is done.

	We next consider the case that $ F(x^k,\delta^{k})>\zeta$ for all $k$. Since $F$ has the KL property at every 
	$(x^*,0)\in \bar\Gamma$, there exists a continuous concave function $\phi$ with $\eta >0$ and 
	neighborhood $U=\{(x,\delta)\in \mathbb{R}^n \times \mathbb{R}^n: \text{dist}((x,\delta),\bar\Gamma) <\tau \}$      such that 
	\begin{equation}\label{eq:eiskl}
	\phi'(F(x,\delta)-\zeta)\text{dist}((0,0),\nabla F(x,\delta))\geq 1
	\end{equation} 
	for all $(x,\delta)\in U \cap\{(x,\delta)\in  \mathbb{R}^n \times \mathbb{R}^n: \zeta< F(x,\delta) < \zeta +\eta \}$.

Let  $\bar \Gamma \subset\mathbb{R}^{2n}$ be the set of limit points of $\{(x^k, \delta^{k})\}$, i.e., $\bar\Gamma:=\{(x^*,0)\mid x^*\in\Gamma\}$, by \cref{prop.sign}(ii), we have
	\[
	\lim_{k\to \infty} \text{dist}((x^k,\delta^{k}),\bar\Gamma) =0.
	\]
	Hence, there exist $k_1\in\mathbb{N}$ such that $\text{dist}((x^k,\delta^{k}),\bar \Gamma)<\tau$ for any $k>k_1$. On the other 
	hand, since  $\{F(x^k,\delta^{k})\}$ is monotonically decreasing  and converges to $\zeta$, there exists $k_2\in\mathbb{N}$ such that $\zeta < F(x^{k},\delta^{k}) <\zeta +\eta$ for all $k>k_2$. Letting $\bar{k}=\max \{k_1,k_2 \}$ and noticing that 
	$F$ is smooth at $(x^k,  \delta^k)$ for all $k>\bar k$,  we know from \eqref{eq:eiskl} that 
	\begin{equation}\label{eq:klp}
	\phi'(F(x^k,\delta^{k})-\zeta) \|\nabla F(x^k,\delta^{k})\|_2 \geq 1, \qquad \text{for all } k\geq \bar{k}. 
	\end{equation}
	It follows that for any $k\geq \bar{k}$, 
	\[
	\begin{aligned}
	& \Big[\phi( F(x^{k},\delta^{k})-\zeta)-\phi(  F(x^{k+1},\delta^{k+1})-\zeta)\Big] \cdot D_1(\|x^k-x^{k-1}\|_2+ \|\delta^{k-1}\|_1 - \|\delta^k\|_1)\\
	\geq\ & \Big[\phi( F(x^{k},\delta^{k})-\zeta)-\phi(  F(x^{k+1},\delta^{k+1})-\zeta)\Big] \cdot \|\nabla F(x^k,\delta^{k})\|_2\\
	\geq\ &   \phi'(F(x^k,\delta^{k})-\zeta)\cdot \|\nabla F(x^k,\delta^{k})\|_2\cdot \Big[F(x^{k},\delta^{k})-F(x^{k+1},\delta^{k+1})\Big]\\
	\geq\ & F(x^k,\delta^{k})-F(x^{k+1},\delta^{k+1})\\
	\geq\ & \hat\beta \|x^{k+1}-x^{k}\|_2^2,
	\end{aligned}
	\]
	where the first inequality is by \cref{lem.acc.gc2}(i), the second inequality is by the concavity of $\phi$, 
	and the third inequality is by \eqref{eq:klp} and the last inequality is by \cref{lem.acc.gc2}(ii). 
	Rearranging and taking the square root of both sides,  and using the inequality of arithmetic 
	and geometric means inequality, we have
	\begin{equation*}
	\begin{aligned}
	\|x^{k}-x^{k+1}\|_2 \le &\ \sqrt{\frac{2D_1}{\hat\beta}[\phi(F(x^k,\delta^{k})-\zeta)-\phi( F(x^{k+1},\delta^{k+1})-\zeta)]}\\ &\times \sqrt{\frac{\|x^k-x^{k-1}\|_2+
	(\|\delta^{k-1}\|_1 - \|\delta^{k}\|_1)}{2}}\\
	\leq &\ \frac{D_1}{\hat\beta} \Big[\phi(F(x^k,\delta^{k})-\zeta)-\phi( F(x^{k+1},\delta^{k+1})-\zeta)\Big]\\ &
	\ + \frac{1}{4}\Big[\|x^k-x^{k-1}\|_2
+ (\|\delta^{k-1}\|_1-\|\delta^{k}\|_1)\Big]. 
	\end{aligned}
	\end{equation*}
	Subtracting $\frac{1}{4}\|x^k-x^{k+1}\|_2$ from both sides, we have 
	\begin{equation*} 
	\begin{aligned}
	\frac{3}{4}\|x^{k+1}-x^{k}\|_2  \leq &\frac{D_1}{\hat\beta}\Big[\phi(F(x^k,\delta^{k})-\zeta)-\phi( F(x^{k+1},\delta^{k+1})-\zeta)\Big] \\
	&+\frac{1}{4}( \|x^k-x^{k-1}\|_2- \|x^{k+1}-x^{k}\|_2+ \|\delta^{k-1}\|_1-\|\delta^{k}\|_1).
	\end{aligned}
	\end{equation*}
	Summing up both sides from $\bar k$ to $t$, we have 
	\begin{equation*}\label{eq:acc.gc6}
	\begin{aligned}
	\frac{3}{4}\sum_{k=\bar k}^t \|x^{k+1}-x^{k}\|_2  \leq &\frac{D_1}{\hat\beta}\Big[\phi(F(x^{\bar k},\delta^{\bar k})-\zeta)-\phi( F(x^{t+1},\delta^{t+1})-\zeta)\Big] \\
	&+\frac{1}{4}(\|x^{\bar{k}}-x^{\bar{k}-1}\|_2 -\|x^{t+1}-x^{t}\|_2 +\|\delta^{\bar k-1}\|_1-\|\delta^{t}\|_1).
	\end{aligned}
	\end{equation*}
	Now letting $t\to\infty$, we know $\|\delta^{t}\|_1\to 0$ and $\|x^{t+1}-x^{t}\|_2\to 0$ by  \cref{prop.sign}(c), 
	and that $\phi( F(x^{t+1},\delta^{t+1})-\zeta)\to \phi(\zeta-\zeta) = \phi(0) = 0$.
	Therefore, we have 
	\begin{equation}\label{sum.x.bound}
	\sum_{k=\bar k}^\infty \|x^{k+1}-x^{k}\|_2 \le \frac{4D_1}{3\hat\beta}\phi(F(x^{\bar k},\delta^{\bar k})-\zeta) 
	+  \frac{1}{3}(\|x^{\bar{k}}-x^{\bar{k}-1}\|_2 + \|\delta^{\bar k-1}\|_1) < \infty.
	\end{equation}
	Hence $\{x^k\}$ is a Cauchy sequence, and consequently 
	it is a convergent sequence. 
\end{proof}

%

\section{Local convergence rate}
We have shown that there is only one unique limit point of $\{x^k\}$ under KL property.  Now we investigate the local convergence rate of  \cref{alg.framework} by assuming that $\phi$ in the KL definition taking the form $\phi(s)=cs^{1-\theta}$ for some 
$\theta\in[0,1)$ and $c> 0$. By the discussion in \S\ref{sec.KL}, this additional  requirement is satisfied by the semialgebraic functions, 
which is also commonly satisfied by a wide range of functions.   
\begin{theorem}
	\label{th:convergerate}
	Suppose $\{x^k\}$ is   generated by \cref{alg.framework}  and converges to $x^*$. Assume that $F$ is a KL function with $\phi$ in the KL definition taking the form $\phi(s)=cs^{1-\theta}$ for some $\theta \in [0,1)$ and $c> 0$.  Then the following statements hold.
	\begin{enumerate}
		\item[(i)] If $\theta =0$, then there exists $k_0\in\mathbb{N}$ so that $x^k\equiv x^*$   for any $k>k_0$;
		\item[(ii)] If $\theta \in (0,\frac{1}{2}]$, then there exist $\gamma \in (0,1), c_1>0$ such that 
		\begin{equation}\label{linear.1}
		\|x^k-x^*\|_2< c_1\gamma^k
		\end{equation} for sufficiently large $k$;
		\item[(iii)] If $\theta \in (\frac{1}{2},1)$, then there exist $c_2>0$ such that 
		\begin{equation}\label{linear.2}
		\|x^k-x^*\|_2< c_2 k^{-\frac{1-\theta}{2\theta-1}}
		\end{equation}   for sufficiently large $k$.
	\end{enumerate}
\end{theorem}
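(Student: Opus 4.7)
The plan is to sharpen the telescoping bound already used in the proof of Theorem \ref{lem.acc.gc2.conv}. Define the tail sum $S_k := \sum_{j\ge k}\|x^{j+1}-x^j\|_2$, so that $\|x^k-x^*\|_2 \le S_k$; summing the per-step inequality obtained in that proof from index $k$ (rather than from $\bar k$) and letting $t\to\infty$ gives, for all sufficiently large $k$,
\[
S_k \;\le\; A_1\,\phi\bigl(F(x^k,\delta^k)-\zeta\bigr) + A_2\bigl(S_{k-1}-S_k\bigr) + A_3\,\|\delta^{k-1}\|_1,
\]
with explicit positive constants $A_1,A_2,A_3$. Specializing $\phi(s)=cs^{1-\theta}$ in the KL inequality \eqref{eq:klp} yields $(F(x^k,\delta^k)-\zeta)^\theta \le c(1-\theta)\|\nabla F(x^k,\delta^k)\|_2$, and combining with the gradient bound from Lemma \ref{lem.acc.gc2}(i) produces
\[
\phi\bigl(F(x^k,\delta^k)-\zeta\bigr) \le B\,\bigl(\|x^k-x^{k-1}\|_2 + \|\delta^{k-1}\|_1 - \|\delta^k\|_1\bigr)^{(1-\theta)/\theta}.
\]
I will also repeatedly use the geometric decay $\|\delta^k\|_1 \le \|\delta^0\|_1(\sqrt{\mu})^k$, which follows from $\epsilon^{k+1}\le\mu\epsilon^k$ together with $\delta=\sqrt{\epsilon}$.

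For (i), $\theta=0$ makes the KL inequality degenerate to $c\,\|\nabla F(x^k,\delta^k)\|_2 \ge 1$ whenever $F(x^k,\delta^k)>\zeta$; since $\|\nabla F(x^k,\delta^k)\|_2\to 0$ by Lemma \ref{lem.acc.gc2}(i), this cannot persist, so $F(x^{k_0},\delta^{k_0})=\zeta$ for some $k_0$, after which Lemma \ref{lem.acc.gc2}(ii) forces $x^{k+1}\equiv x^k=x^*$. For (ii), the exponent $(1-\theta)/\theta$ is at least one; since $\|\nabla F(x^k,\delta^k)\|_2\to 0$, for large $k$ the power is dominated by its base, giving $\phi(F(x^k,\delta^k)-\zeta)\le B'(S_{k-1}-S_k+\|\delta^{k-1}\|_1)$. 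Substituting into the tail-sum bound and rearranging produces a one-step recursion $S_k\le \rho\,S_{k-1}+\beta\,\|\delta^{k-1}\|_1$ with $\rho\in(0,1)$; iterating and using the geometric decay of $\|\delta^k\|_1$ gives $S_k\le c_1\gamma^k$ for some $\gamma\in(0,1)$, which is \eqref{linear.1}.

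For (iii), set $q:=(1-\theta)/\theta\in(0,1)$; the subadditivity $(a+b)^q\le a^q+b^q$ on $\mathbb{R}_+$ turns the bound on $\phi$ into $\phi(F(x^k,\delta^k)-\zeta)\le B''(S_{k-1}-S_k)^q+B''\|\delta^{k-1}\|_1^q$. The geometric tails $\|\delta^{k-1}\|_1$ and $\|\delta^{k-1}\|_1^q$ are eventually absorbed by the polynomially decaying right-hand side, so for $k$ large the tail-sum estimate reduces to $S_k\le C(S_{k-1}-S_k)^q$, equivalently
\[
S_{k-1}-S_k \;\ge\; C^{-1/q}\,S_k^{\,1/q}, \qquad \alpha := 1/q = \theta/(1-\theta) > 1.
\]
Applying a standard discrete comparison lemma for positive sequences satisfying $a_{k-1}-a_k \gtrsim a_k^\alpha$ then delivers $S_k\le c_2\,k^{-1/(\alpha-1)}=c_2\,k^{-(1-\theta)/(2\theta-1)}$, which is \eqref{linear.2}. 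The main obstacle I anticipate is in case (iii): one must justify carefully that the geometrically decaying $\delta$-residuals are truly absorbed at the correct order so that the polynomial comparison lemma applies cleanly, and the bookkeeping of constants must be tracked across the powers $q$ and $1/q$. The borderline subcase of (ii) in which the induced rate $\rho$ coincides with $\sqrt{\mu}$ is a milder, analogous nuisance, resolved by inflating $\gamma$ slightly to pass from a discrete convolution to a pure geometric bound; case (i) is essentially immediate once the degenerate KL inequality is noted.
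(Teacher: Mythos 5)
Your overall architecture matches the paper's: tail sums $S^k$, the KL inequality specialized to $\phi(s)=cs^{1-\theta}$, the gradient bound of Lemma \ref{lem.acc.gc2}(i), and the Attouch--Bolte comparison for the sublinear case. Parts (i) and (ii) are fine; your one-step recursion $S_k\le\rho S_{k-1}+\beta\|\delta^{k-1}\|_1$ with a discrete convolution is a legitimate alternative to the paper's device (the paper instead gets a pure two-step contraction for the combined quantity $S^k+\tfrac{\sqrt{\mu}}{1-\mu}\|\delta^k\|_1$), and your handling of the resonant case $\rho=\sqrt{\mu}$ is the right fix.

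The genuine gap is in part (iii), exactly where you flagged your own concern. The reduction you assert --- that the geometric $\delta$-terms are ``absorbed'' so that the estimate collapses to $S_k\le C(S_{k-1}-S_k)^q$ for large $k$ --- does not follow as stated. To absorb $\|\delta^{k-1}\|_1^q$ into $(S_{k-1}-S_k)^q$ termwise you would need $\|\delta^{k-1}\|_1\lesssim S_{k-1}-S_k=\|x^k-x^{k-1}\|_2$, but there is no lower bound on the per-step progress: $\|x^k-x^{k-1}\|_2$ can be zero or arbitrarily small compared with the geometric tail at individual iterations. Dropping the $\delta$-terms ``because they decay faster than the polynomial target'' requires a perturbed comparison lemma for $a_{k-1}-a_k\ge c\,a_k^{\alpha}-\epsilon_k$, which you neither state nor prove. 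The paper avoids this entirely by a Lyapunov-type augmentation: it adds $\tfrac{\sqrt{\mu}}{1-\mu}\|\delta^k\|_1$ to $S^k$, uses the identity $\delta^{k-1}\le\tfrac{\sqrt{\mu}}{1-\mu}(\delta^{k-2}-\delta^k)$ so that the additive $\|\delta^{k-1}\|_1$ becomes part of the telescoping difference of the augmented quantity, and obtains the clean recursion
\begin{equation*}
\Delta_t^{\frac{\theta}{1-\theta}}\le C_3(\Delta_{t-1}-\Delta_t),\qquad \Delta_t:=S^{2t}+\tfrac{\sqrt{\mu}}{1-\mu}\|\delta^{2t}\|_1,
\end{equation*}
along even and odd subsequences, to which the standard comparison argument of \cite[Theorem 2]{Attouch2009} applies directly. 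You should either adopt this augmentation or supply and prove the perturbed comparison lemma; as written, part (iii) is not complete.
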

\begin{proof} 
(i)  If $\theta=0$, then $\phi(s)=cs$ and $\phi'(s)\equiv c$.  We claim that there must exist $k_0>0$ such that $F(x^{k_0},\delta^{k_0})=\zeta$. Suppose by contradiction this is 
	not true so that $F(z^k)>\zeta$ for all $k$. Since $\lim\limits_{k\to \infty } x^k=x^*$ and the sequence $\{F(x^k,\delta^{k})\}$ is monotonically decreasing to $\zeta$ by \cref{lem.acc.gc2}.  
	The KL inequality implies that  all sufficiently large $k$, 
	\[
	c \|\nabla F(x^k,\delta^{k})\|_2 \geq 1,
	\]
	contradicting  $\|\nabla F(x^k,\delta^{k})\|_2 \to 0$ by \cref{lem.acc.gc2}(i).  
	Thus, there exists $k_0\in\mathbb{N}$ such that $F(x^{k},\delta^k)= F(x^{k_0},\delta^{k_0})=\zeta$ for all $k>k_0$. Hence, we conclude from  \cref{lem.acc.gc2}(ii) that $x^{k+1} = x^k$ for all $k > k_0$, meaning $x^k \equiv x^* = x^{k_0}$   for all $k \ge k_0$. This proves (i).

	(ii)-(iii) Now consider $\theta\in (0,1)$. First of all, if there exists $k_0\in\mathbb{N}$ such that $F(x^{k_0}, \delta^{k_0})=\zeta$, then using the same argument of the proof for (ii),  we can see that $\{x^k\}$ converges finitely.   
	Thus, we only need to consider the case that $F(x^k, \delta^k)>\zeta$ for all $k$.
	
	Define $S^k=\sum_{l=k}^{\infty}\|x^{l+1}-x^l\|_2$. It holds that 
	\[ \|x^k - x^*\|_2 = \|x^k - \lim_{t\to\infty} x^t\|_2 =   \|\lim_{t\to\infty}\sum_{l=k}^t( x^{l+1} - x^l)\|_2 \le \sum_{l=k}^{\infty}\|x^{l+1}-x^l\|_2=S^k.\]
	Therefore, we only have to prove $S^k$ also has the same upper bound as in \eqref{linear.1} and  \eqref{linear.2}. 
	
	To derive the upper bound for $S^k$,  	
	by KL inequality with $\phi'(s)=c(1-\theta)s^{-\theta}$, for $k > \bar k$,
	\begin{equation}
	\label{eq:rate2}
	c(1-\theta)(F(x^k,\delta^k)-\zeta)^{-\theta} \| \nabla F(x^k,\delta^{k}))\|_2 \geq 1.
	\end{equation}
	On the other hand, using \cref{eq:acc.gc4}(i) and the definition of $S^k$, we see that for all sufficiently large $k$,
	\begin{equation}
	\label{eq:rate3}
	\| \nabla F(x^k,\delta^{k}))\|_2 \leq D_1(S^{k-1}-S^k+\|\delta^{k-1}\|_1-\|\delta^{k}\|_1)
	\end{equation}
	Combining \eqref{eq:rate2} with \eqref{eq:rate3}, we have
	\[
	(F(x^k,\delta^k)-\zeta)^{\theta}\leq D_1  c(1-\theta) (S^{k-1}-S^k+\|\delta^{k-1}\|_1-\|\delta^{k}\|_1).
	\]
Taking a power of $(1-\theta)/\theta$ to both sides of the above inequality and scaling both sides by $c$, we obtain that for all $k> \bar k$
\begin{equation}\label{eq:rate.5.27}
\begin{aligned}
\phi(F(x^k,\delta^k)-\zeta)&=c\Big[F(x^k,\delta^k)-\zeta\Big]^{1-\theta}\\
&\leq c  \Big[D_1    c(1-\theta) (S^{k-1}-S^k+\|\delta^{k-1}\|_1-\|\delta^{k}\|_1)\Big]^{\frac{1-\theta}{\theta}}\\
&\leq c  \Big[D_1    c(1-\theta) (S^{k-1}-S^k+\|\delta^{k-1}\|_1)\Big]^{\frac{1-\theta}{\theta}},
\end{aligned}
\end{equation} 
From   \eqref{sum.x.bound}, we have 
	\begin{equation}\label{eq:rate1}
	\begin{aligned}
	S^k 
	\le \frac{4D_1}{3\hat\beta}\phi(F(x^k, \delta^k)-\zeta) + \frac{1}{3}( \|x^k - x^{k-1}\|_2 +  \|\delta^{k-1}\|_1).
	\end{aligned}
	\end{equation}
Combining \eqref{eq:rate.5.27} and    \eqref{eq:rate1}, we have 
\begin{equation}\label{eq:rate5}
\begin{aligned}
S^k &\leq C_1[S^{k-1}-S^k+\|\delta^{k-1}\|_1]^{\frac{1-\theta}{\theta}} + \frac{1}{3}(S^{k-1}-S^k +  \|\delta^{k-1}\|_1) \\
&\leq C_1[S^{k-2}-S^k+\|\delta^{k-1}\|_1]^{\frac{1-\theta}{\theta}} + \frac{1}{3}[S^{k-2}-S^k+\|\delta^{k-1}\|_1]
\end{aligned}
\end{equation}
where $C_1=\frac{4D_1c}{3\hat\beta} \left(D_1\cdot c(1-\theta)\right)^{\frac{1-\theta}{\theta}}$.
It follows that  
\begin{equation}\label{eq.47rate}
\begin{aligned}
&\ S^k + \frac{\sqrt{\mu}}{1-\mu}\|\delta^k\|_1 \\
\leq &\ C_1[S^{k-2}-S^k+\|\delta^{k-1}\|_1]^{\frac{1-\theta}{\theta}} + \frac{1}{3}[S^{k-2}-S^k+\|\delta^{k-1}\|_1]+ \frac{\sqrt{\mu}}{1-\mu}\|\delta^k\|_1\\
\leq&\  C_1[S^{k-2}-S^k+\|\delta^{k-1}\|_1]^{\frac{1-\theta}{\theta}} + \frac{1}{3}[S^{k-2}-S^k+\|\delta^{k-1}\|_1]+ \frac{\mu}{1-\mu}\|\delta^{k-1}\|_1\\
\leq&\ C_1 [S^{k-2}-S^k+\|\delta^{k-1}\|_1]^{\frac{1-\theta}{\theta}}+ C_2[S^{k-2}-S^k+\|\delta^{k-1}\|_1],
\end{aligned}
\end{equation}
with  
$
C_2:=\frac{1}{3}+\frac{\mu}{1-\mu}
$
and the second inequality is by the update $\delta^k \le \sqrt{\mu} \delta^{k-1}$.

For part (ii), $\theta \in (0, \frac{1}{2}]$.   Notice that 
\[\frac{1-\theta}{\theta}\geq 1\ \text{ and }\  S^{k-2}-S^k+\|\delta^{k-1}\|_1 \to 0.\]
Hence, there exists sufficient large $k$ such that 
\[ \Big[S^{k-2}-S^k+\|\delta^{k-1}\|_1\Big]^{\frac{1-\theta}{\theta}}  \le  S^{k-2}-S^k+\|\delta^{k-1}\|_1,\]
we assume the above inequality holds for all $k \ge \bar k$.
This,  combined with \eqref{eq.47rate}, yields 
\begin{equation} \label{eq:skrel}
S^k + \frac{\sqrt{\mu}}{1-\mu}\|\delta^k\|_1 \le (C_1+C_2) \Big[S^{k-2}-S^k+\|\delta^{k-1}\|_1\Big]
\end{equation}
for any $k\ge \bar k$. Using $\delta_k \le \mu \delta_{k-1}$, we can show that
\begin{equation}\label{eq:delta}
\delta^{k-1}\leq \frac{\sqrt{\mu}}{1-\mu}(\delta^{k-2}-\delta^k).
\end{equation}
Combining \eqref{eq:skrel} and \eqref{eq:delta} gives
\[\begin{aligned}
 S^k + \frac{\sqrt{\mu}}{1-\mu}\|\delta^k\|_1  &\le (C_1+C_2)\bigg[\big(S^{k-2}+\frac{\sqrt{\mu}}{1-\mu}\|\delta^{k-2}\|_1\big)-\big(S^{k}+\frac{\sqrt{\mu}}{1-\mu}\|\delta^{k}\|_1
 \big)\bigg].
 \end{aligned}
\]
Rearranging this inequality gives
\[\begin{aligned}
S^k + \frac{\sqrt{\mu}}{1-\mu}\|\delta^k\|_1  \le & \frac{C_1+C_2}{C_1+C_2+1}\bigg[S^{k-2}+\frac{\sqrt{\mu}}{1-\mu}\|\delta^{k-2}\|_1\bigg]\\
\le & \left( \frac{C_1+C_2}{C_1+C_2+1} \right)^{ \lfloor \frac{k}{2} \rfloor } \bigg[S^{k\bmod 2}+\frac{\sqrt{\mu}}{1-\mu}\|\delta^{k\bmod 2}\|_1\bigg]\\
\le & \left( \frac{C_1+C_2}{C_1+C_2+1} \right)^{ \frac{k-1}{2} } \bigg[S^0+\frac{\sqrt{\mu}}{1-\mu}\|\delta^0\|_1\bigg].
\end{aligned}
\]
Therefore, for any $k\ge \bar k$,
\[ \|x^k-x^*\|_2 \le S^k   + \frac{\sqrt{\mu}}{1-\mu} \|\delta^{k}\|_1   \le c_1 \gamma^k \]
with 
\[ c_1=  (S^0 +\frac{\sqrt{\mu}}{1-\mu}\|\delta^0\|) \left( \frac{C_1+C_2}{C_1+C_2+1} \right)^{ -\frac{1}{2} }\quad \text{and}\quad \gamma =\sqrt{\frac{C_1+C_2}{C_1+C_2+1}},\] 
which complets the proof of (ii).

For part (iii), $\theta \in (\frac{1}{2},1)$.   Notice that 
\[\frac{1-\theta}{\theta}< 1\ \text{ and }\  S^{k-2}-S^k+\|\delta^{k-1}\|_1 \to 0.\]
Hence, there exists sufficient large $k$ such that 
\[ S^{k-2}-S^k+\|\delta^{k-1}\|_1 \le \Big[S^{k-2}-S^k+\|\delta^{k-1}\|_1\Big]^{\frac{1-\theta}{\theta}},\]
we assume the above inequality holds for $k \ge \bar k$.
This,  combined with \eqref{eq.47rate}, yields 
\[ S^k + \frac{\sqrt{\mu}}{1-\mu}\|\delta^k\|_1 \le (C_1+C_2) \Big[S^{k-2}-S^k+\|\delta^{k-1}\|_1\Big]^{\frac{1-\theta}{\theta}}.\]  This, combined with \eqref{eq:delta}, yields 
\begin{equation}
S^k+ \frac{\sqrt{\mu}}{1-\mu}\|\delta^k\|_1 \leq (C_1+C_2) \Big[S^{k-2}+\frac{\sqrt{\mu}}{1-\mu}\|\delta^{k-2}\|_1-(S^k+\frac{\sqrt{\mu}}{1-\mu}\|\delta^{k}\|_1)\Big]^{\frac{1-\theta}{\theta}}.
\end{equation}
Raising to a power of $\frac{\theta}{1-\theta}$ of both sides of the above inequality, we see
\begin{equation}
\Big[S^k+\frac{\sqrt{\mu}}{1-\mu}\|\delta^k\|_1\Big]^{\frac{\theta}{1-\theta}} \le C_3\Big[S^{k-2}+\frac{\sqrt{\mu}}{1-\mu}\|\delta^{k-2}\|_1-(S^k+\frac{\sqrt{\mu}}{1-\mu}\|\delta^{k}\|_1)\Big]
\end{equation}
with $C_3:=(C_1+C_2)^{\frac{\theta}{1-\theta}}$.

Consider the ``even'' subsequence of $\{\bar k, \bar k+1, \ldots\}$ and define $\{\Delta_t\}_{t\ge N_1}$ with  $N_1:= \lceil \bar k/2 \rceil$, and 
$\Delta_t:= S^{2t}+\frac{\sqrt{\mu}}{1-\mu}\|\delta^{2t}\|_1$. Then for all $t\ge N_1$, we have
\begin{equation}
\label{eq:delta1}
\Delta_t^{\frac{\theta}{1-\theta}} \leq C_3(\Delta_{t-1}-\Delta_t)
\end{equation}
The remaining part of our proof is similar to    \cite[Theorem 2]{Attouch2009} (starting from  \cite[Equation (13)]{Attouch2009}).
Define $h:(0,+\infty)\to \mathbb{R}$ by $h(s)=s^{-\frac{\theta}{1-\theta}}$ and let $T\in (1,+\infty)$. Take $k\geq N_1$ and consider   the case that $h(\Delta_k)\leq Th(\Delta_{k-1})$ holds. By rewriting 
 \eqref{eq:delta1}  as
\[
1\leq  C_3(\Delta_{k-1}-\Delta_{k})    \Delta^{-\frac{\theta}{1-\theta}}_{k},
\]
we obtain that 
\[\begin{aligned}
1 &\leq C_3(\Delta_{k-1}-\Delta_{k})h(\Delta_k)\\
&\leq TC_3(\Delta_{k-1}-\Delta_{k})h(\Delta_{k-1})\\
&\leq TC_3\int_{\Delta_k}^{\Delta_{k-1}}h(s)ds\\
&\leq TC_3\frac{1-\theta}{1-2\theta}[\Delta_{k-1}^{\frac{1-2\theta}{1-\theta}}-\Delta_{k}^{\frac{1-2\theta}{1-\theta}}].
\end{aligned}
\]
Thus if we set $u=\frac{2\theta-1}{(1-\theta)TC_3}>0$ and $\nu=\frac{1-2\theta}{1-\theta}<0$ one obtains that 
\begin{equation}
\label{eq:delta2}
0<u \leq \Delta_{k}^{\nu}-\Delta_{k-1}^{\nu}.
\end{equation}

Assume now that $h(\Delta_k)>Th(\Delta_k)$ and set $q=(\frac{1}{T})^{\frac{1-\theta}{\theta}} \in (0,1)$. It follows immediately that $\Delta_k\leq q\Delta_{k-1}$ and furthermore - recalling that $\nu$ is negative - we have
\[ 
\Delta_k^{\nu} \geq q^{\nu}\Delta_{k-1}^{\nu} \quad\text{and}\quad 
\Delta_k^{\nu}-\Delta_{k-1}^{\nu}  \geq (q^{\nu}-1)\Delta_{k-1}^{\nu}.
\]
Since $q^{\nu}-1>0$ and $\Delta_t\to 0^+$ as $t\to +\infty$, there exists $\bar{u}>0$ such that $(q^{\nu}-1)\Delta_{t-1}^{\nu}>\bar{u}$ for all $t\geq N_1$. Therefore we obtain that 
\begin{equation}
\label{eq:delta3}
\Delta_{k}^{\nu}-\Delta_{k-1}^{\nu}\geq \bar{u}.
\end{equation}
If we set $\hat{u}=\min\{u, \bar{u}\}>0$, one can combine \eqref{eq:delta2}  and  \eqref{eq:delta3}  to obtain that 
\[
\Delta_k^{\nu}-\Delta_{k-1}^{\nu}\geq \hat{u}>0
\]
for all $k\geq N_1$. By summing those inequalities from $N_1$ to some $t$ greater than $N_1$ we obtain that $\Delta_t^{\nu}-\Delta_{N_1}^{\nu}\geq \hat{u}(t-N_1)$, implying 
\begin{equation}
\label{power.delta}
\Delta_t\leq [\Delta_{N_1}^{\nu}+\hat{u}(t-N_1)]^{1/{\nu}}\leq C_4t^{-\frac{1-\theta}{2\theta-1}},
\end{equation}
for some $C_4>0$.

As for the ``odd'' subsequence of $\{\bar k, \bar k+1, \ldots\}$, we can define $\{\Delta_t\}_{t\ge \lceil \bar k/2\rceil}$ with 
$\Delta_t:= S^{2t+1}+\frac{\sqrt{\mu}}{1-\mu}\|\delta^{2t+1}\|_1$ and then can still show that \eqref{power.delta} holds true. 

Therefore,  for all sufficiently large and even number  $k$,  
\[ \|x^k-x^*\|_2 \leq  \Delta_{\frac{k}{2}}  \leq 2^{{\frac{1-\theta}{2\theta-1}} } C_4 k^{-\frac{1-\theta}{2\theta-1}}.\]  
For all sufficiently large and odd number  $k$,  there exists $C_5 > 0$ such that 
\[ \|x^k-x^*\|_2 \leq  \Delta_{\frac{k-1}{2}}  \leq 2^{{\frac{1-\theta}{2\theta-1}} } C_4 (k-1)^{-\frac{1-\theta}{2\theta-1}} \le   2^{{\frac{1-\theta}{2\theta-1}} } C_5 k^{-\frac{1-\theta}{2\theta-1}}.\]  
Overall, we have 
\[ \|x^k-x^*\|_2 \le    c_2 k^{-{\frac{1-\theta}{2\theta-1}} } \]
where 
\[c_2:=2^{{\frac{1-\theta}{2\theta-1}} } \max(C_4, C_5). \]
This completes the proof of (iii).

\end{proof}

\section{Conclusion} 

In this paper, we have analyzed the global convergence and local convergence rate of the proximal iteratively reweighted $\ell_1$ methods for solving $\ell_p$ 
regularization problems under KL property.  We have shown that the iterates generated by these methods have a unique limit point, and these methods have a locally linear convergence or sublinear convergence under KL property. It should be noticed that our analysis can be easily extended to 
other types of non-Lipschitz regularization problems under the assumption of the KL property for the loss function.

\clearpage

\bibliographystyle{IMANUM-BIB}
\bibliography{IMANUM-refs}

\end{document}